\newtheorem{thm}{Theorem}[section]
\newtheorem{lem}[thm]{Lemma}
\numberwithin{equation}{section}
\journal{}
\begin{document}
\begin{spacing}{1.15}
\begin{frontmatter}
\title{\textbf{~ Matching extension and distance spectral radius\\ }}

%% use optional labels to link authors explicitly to addresses:
\author[label1,label2]{Yuke Zhang}\ead{zhang_yk1029@163.com}
\author[label2]{Edwin R. van Dam}\ead{Edwin.vanDam@tilburguniversity.edu}
% \address[label1]{<address>}
%% \address[label2]{<address>}

\address[label1]{School of Mathematics, East China University of Science and Technology, Shanghai, P.R.~China}
\address[label2]{Department of Econometrics and O.R., Tilburg University, Tilburg, Netherlands\\} %\today}

\begin{abstract}
A graph is called $k$-extendable if each $k$-matching can be extended to a perfect matching. We give spectral conditions for the $k$-extendability of graphs and bipartite graphs using Tutte-type and Hall-type structural characterizations. Concretely, we give a sufficient condition in terms of the spectral radius of the distance matrix for the $k$-extendability of a graph and completely characterize the corresponding extremal graphs. 
A similar result is obtained for bipartite graphs.  
\end{abstract}

\begin{keyword} distance spectral radius, matchings, extendability.\\
\emph{AMS classification(2020):} 05C50 15A18.
\end{keyword}
\end{frontmatter}

\section{Introduction}

In this paper, we give conditions for the extendability of matchings in a graph in terms of the spectral radius of the distance matrix. 
We say that a graph is \textit{$k$-extendable} if each matching consisting of $k$ edges can be extended to a perfect matching. 
Historically, matching extension was born out of the canonical decomposition theory for graphs with perfect matchings \cite{zbMATH04002097}. The study of the concept of $k$-extendability gradually evolved from the concept of so-called elementary (that is, 1-extendable) bipartite graphs. 
 Hetyei \cite{hetyei1964rectangular} provided four useful characterizations of elementary bipartite graphs.
Lov{\'a}sz \cite{lovasz1972structure} showed that the class of bipartite elementary graphs plays an important role in the structure of graphs with a perfect matching.
The first results on $k$-extendable graphs (for arbitrary $k$) were obtained by Plummer \cite{plummer1980n}. In 1980, he studied the properties of $k$-extendable graphs and showed that nearly all $k$-extendable graphs $(k\ge 2)$ are $(k -1)$-extendable and $(k+1)$-connected. Motivated by this work, many researchers further looked at the relationship between $k$-extendability and other graph parameters, e.g., degree \cite{ananchuen1997matching,xu2003degree}, connectivity \cite{lou2004connectivity}, genus \cite{dean1992matching,plummer1988matching} and toughness \cite{plummer1988toughness}. We refer the interested reader also to three surveys \cite{plummer1994extending,plummer1996extending,plummer2008recent} and to the list of references therein.

With the development of spectral graph theory, also the relation between matchings and graph eigenvalues was studied, e.g., \cite{feng2007spectral,liu2018spectral,suil2021spectral} for adjacency eigenvalues,  \cite{brouwer2005eigenvalues,gu2022tight} for Laplacian eigenvalues and \cite{liu2010spectral,zhang2021perfect} for distance eigenvalues. Recently, Fan and Lin \cite{fan2022spectral} investigated the $k$-extendability of graphs from an adjacency spectral perspective. In this paper, we study the relationship between $k$-extendability and the distance spectral radius.

Let $G$ be a graph with vertex set
	$V(G)=\{v_{1}, v_{2}, \ldots,$ $v_{n}\}$ and edge set $E(G)$.
	The \emph{distance} between $v_i$ and $v_j$, denoted by $d({v_i,v_j})$, is the length of a shortest path from $v_i$ to $v_j$. The \emph{distance matrix} of $G$,
	denoted by $D(G)$, is a real symmetric matrix whose $(i,\,j)$-entry is $d({v_i,v_j})$. 
The distance matrix of a graph was introduced by Graham and Pollak \cite{graham1971addressing} to study the routing of messages or data between computers, and this motivated much additional work on the distance matrix.
For example, Merris \cite{merris1990distance} provided an estimation of the spectrum of of the distance matrix of a tree. Since then, there has been a lot of research on distance matrices and their spectra; see the three surveys \cite{aouchiche2014distance,hogben2022spectra,lin2021distancesurvey}.

 By the Perron-Frobenius Theorem, the spectral radius of the distance matrix of $G$, which we denote by $\partial(G)$, equals its largest eigenvalue, and is called the \emph{distance spectral radius} of $G$.
As is usual when studying the distance spectrum, we will assume that the graphs under consideration are connected.

Denote by $\vee$ and $\cup$ the \emph{join} and \emph{union} of two graphs, respectively. Furthermore, we denote by $K_{a,b}\diamond K_{c,d}$ the bipartite graph obtained from the union of $K_{a,b}$ and $ K_{c,d}$ by adding all edges between the parts of the sizes $b$ and $c$. A bipartite graph is called \emph{balanced} if both parts of the bipartition have equal size. Clearly, every bipartite graph with a perfect matching (and hence a $k$-extendable bipartite graph) must be balanced.

Zhang and Lin \cite[Thm.~1.1]{zhang2021perfect} showed that the graph of order $2n$ with the smallest distance spectral radius that does not have a perfect matching is $K_{n-1}\vee(n+1)K_1$ for $n \leq 4$ and $K_1\vee (K_{2n-3}\cup2K_1)$ for $n \geq 5$. They \cite[Thm.~1.2]{zhang2021perfect} also showed that the balanced bipartite graph of order $2n$ with the smallest distance spectral radius that does not have a perfect matching is $K_{n-1,n-2}\diamond K_{1,2}$.

Note that existence of a perfect matching can be considered as $0$-extendability. Instead, we will focus on $k$-extendability for $k\ge 1$ and prove that the graph of order $2n$  with smallest distance spectral radius that is not $k$-extendable is $K_{2k}\vee (K_{2n-2k-1}\cup K_1)$. The balanced bipartite graph of order $2n$ with the smallest distance spectral radius that is not $k$-extendable is $K_{n-k,n-1}\diamond K_{k,1}$.

See Figure \ref{extremalfig} for a picture of the extremal graphs that are not $k$-extendable. It is clear that these graphs are quite different from the extremal graphs that do not have a perfect matching (mentioned above; see Zhang and Lin \cite{zhang2021perfect}), and that they do have perfect matchings. 

\begin{figure}[htp]
	\setlength{\unitlength}{1pt}
\begin{center}
	\begin{picture}(371.2,134.1)

	\put(70.3,62.4){\oval(63.1,26.1)}\put(69.6,109.5){\oval(83.4,29.7)}
	\qbezier(45.0,105.1)(47.9,86.6)(50.8,68.2)
	\qbezier(97.9,105.1)(94.3,86.6)(90.6,68.2)
	\put(70.3,27.6){\circle*{4}}
	\qbezier(50.8,56.6)(60.5,42.1)(70.3,27.6)
	\qbezier(91.4,57.3)(80.8,42.4)(70.3,27.6)

	\put(337.0,47.9){\circle*{4}}

%%%%%%%%%%%%%%%%%%%%%%%%%%%%%%%	
	\qbezier(280.6,99.7)(280.6,94.7)(269.7,91.2)\qbezier(269.7,91.2)(258.9,87.7)(243.6,87.7)\qbezier(243.6,87.7)(228.3,87.7)(217.5,91.2)\qbezier(217.5,91.2)(206.6,94.7)(206.6,99.7)\qbezier(206.6,99.7)(206.6,104.6)(217.5,108.1)\qbezier(217.5,108.1)(228.3,111.7)(243.6,111.7)\qbezier(243.6,111.7)(258.9,111.7)(269.7,108.1)\qbezier(269.7,108.1)(280.6,104.6)(280.6,99.7)

	\qbezier(279.9,51.1)(279.9,46.2)(269.0,42.7)\qbezier(269.0,42.7)(258.2,39.2)(242.9,39.2)\qbezier(242.9,39.2)(227.6,39.2)(216.7,42.7)\qbezier(216.7,42.7)(205.9,46.2)(205.9,51.1)\qbezier(205.9,51.1)(205.9,56.1)(216.7,59.6)\qbezier(216.7,59.6)(227.6,63.1)(242.9,63.1)\qbezier(242.9,63.1)(258.2,63.1)(269.0,59.6)\qbezier(269.0,59.6)(279.9,56.1)(279.9,51.1)

	\qbezier(371.2,101.1)(371.2,95.6)(360.2,91.7)\qbezier(360.2,91.7)(349.1,87.7)(333.5,87.7)\qbezier(333.5,87.7)(317.9,87.7)(306.8,91.7)\qbezier(306.8,91.7)(295.8,95.6)(295.8,101.1)\qbezier(295.8,101.1)(295.8,106.7)(306.8,110.6)\qbezier(306.8,110.6)(317.9,114.6)(333.5,114.6)\qbezier(333.5,114.6)(349.1,114.6)(360.2,110.6)\qbezier(360.2,110.6)(371.2,106.7)(371.2,101.1)
	% \put(220.4,98.6){\circle*{4}}
	% \put(220.4,52.2){\circle*{4}}

	\qbezier(220.4,95.6)(220.4,75.4)(220.4,58.2)
	% \put(266.1,99.3){\circle*{4}}
	% \put(266.1,50.8){\circle*{4}}

	\qbezier(264.1,95.3)(264.1,75.0)(264.1,56.8)
	%\put(310.3,101.5){\circle*{4}}

	\qbezier(222.4,58.2)(265.4,76.9)(310.3,95.5)
	%\put(360.3,100.8){\circle*{4}}

	\qbezier(266.1,56.8)(313.2,75.8)(360.3,95.8)
	%\put(335.0,47.9){\circle*{4}}

	\qbezier(312.3,95.5)(324.6,74.7)(337.0,47.9)

	\qbezier(362.3,95.8)(349.6,74.3)(337.0,47.9)
	%%%%%%%%%%%%%%%%%%%%%%%%%%%%%%%%%%	
	\put(60.1,66.1){\makebox(0,0)[tl]{$K_{2k}$}}
	\put(50.0,115.5){\makebox(0,0)[tl]{$K_{2n-2k-1}$}}
	\put(15.0,2.0){\makebox(0,0)[tl]{$K_{2k}\vee (K_{2n-2k-1}\cup K_1)$}}
	\put(222.2,105.0){\makebox(0,0)[tl]{\small $(n-k)K_1$}}
	\put(220.5,55.4){\makebox(0,0)[tl]{\small $(n-1)K_1$}}
	\put(326.2,105.0){\makebox(0,0)[tl]{\small $kK_1$}}
	\put(250.2,2.0){\makebox(0,0)[tl]{$K_{n-k,n-1}\diamond K_{k,1}$}}
	\put(66.7,84.1){\circle*{2}}
	\put(74.7,84.1){\circle*{2}}
	\put(70.3,84.1){\circle*{2}}
	\put(66.7,41.3){\circle*{2}}
	\put(74.7,41.3){\circle*{2}}
	\put(70.3,41.3){\circle*{2}}
	\put(240.7,76.9){\circle*{2}}
	\put(248.0,76.9){\circle*{2}}
	\put(244.3,76.9){\circle*{2}}
	\put(331.3,74.7){\circle*{2}}
	\put(340.0,74.7){\circle*{2}}
	\put(335.7,74.7){\circle*{2}}
	\put(290.0,79.8){\circle*{2}}
	\put(295.8,75.4){\circle*{2}}
	\put(292.9,77.6){\circle*{2}}
	% \put(239.3,98.6){\circle*{2}}
	% \put(247.2,98.6){\circle*{2}}
	% \put(242.9,98.6){\circle*{2}}
	% \put(241.4,51.5){\circle*{2}}
	% \put(248.7,51.5){\circle*{2}}
	% \put(245.1,51.5){\circle*{2}}
	% \put(331.3,100.8){\circle*{2}}
	% \put(338.6,100.8){\circle*{2}}
	% \put(335.0,100.8){\circle*{2}}
	\end{picture}
\end{center}
	\caption{The extremal graphs that are not $k$-extendable}
\label{extremalfig}
\end{figure}
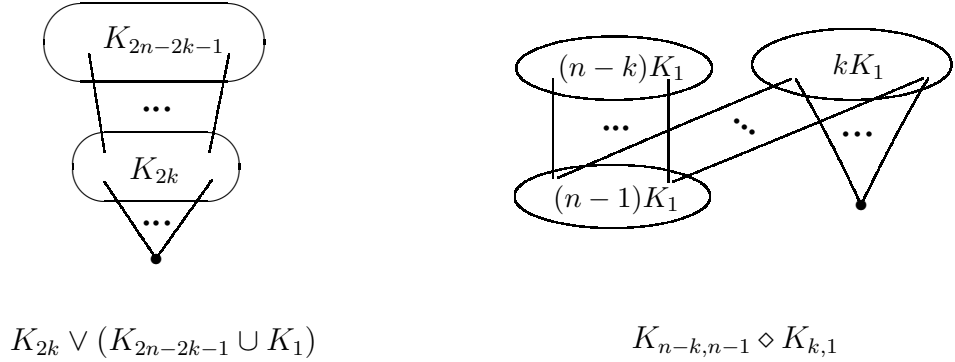

Our paper is further organized as follows: In Section \ref{sec:preliminaries}, we introduce some lemmas on the structure of non-extendable graphs (Section \ref{subsec:structure}) and distance spectral radius (Section \ref{subsec:distance}). In Section \ref{Sec:proof1}, we determine the graph  with the smallest distance spectral radius among all non-extendable graphs of given order (Theorem \ref{thm1}). In Section \ref{Sec:proof2}, we give a sufficient condition in terms of the distance spectral radius for the $k$-extendability of a bipartite graph (Theorem \ref{thm2}). In Section \ref{sec:conclusion}, we finish the paper with an analogous result on $k$-factor-criticality in graphs (Theorem \ref{thm3}).

\section{Preliminaries}\label{sec:preliminaries}

\subsection{The structure of non-extendable graphs}\label{subsec:structure}

We start with a Tutte-type characterization for $k$-extendable graphs obtained by Chen \cite{chen1995binding}. For any
$S \subseteq V (G)$, let $G[S]$ be the subgraph of $G$ induced by $S$ and $G - S$ be the subgraph induced by $V(G)\setminus S$. Denote the
number of odd components in $G$ by  $o(G)$.
\begin{lem}\cite[Lemma 1]{chen1995binding}\label{chenlemma}
Let $k \ge 1$. A graph G is $k$-extendable if and only if
$$o(G - S)\le |S|-2k $$
for any $S \subseteq V(G)$ that contains a $k$-matching.
\end{lem}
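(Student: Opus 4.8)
The plan is to derive this characterization directly from Tutte's classical theorem: a graph $H$ has a perfect matching if and only if $o(H - T) \le |T|$ for every $T \subseteq V(H)$. The link to $k$-extendability is the elementary observation that $G$ is $k$-extendable precisely when $G - V(M)$ has a perfect matching for every $k$-matching $M$ of $G$, where $V(M)$ denotes the set of $2k$ endpoints of $M$: indeed, any perfect matching of $G - V(M)$ together with $M$ is a perfect matching of $G$ containing $M$, and conversely any perfect matching of $G$ containing $M$ restricts to one of $G - V(M)$.

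For the forward implication, assume $G$ is $k$-extendable and let $S \subseteq V(G)$ contain a $k$-matching $M$. Since $V(M) \subseteq S$ and $|V(M)| = 2k$, the set $T := S \setminus V(M)$ lies in $V(G - V(M))$ and has size $|S| - 2k$. By the observation above, $G - V(M)$ has a perfect matching, so Tutte's theorem applied to $H = G - V(M)$ gives $o\big((G - V(M)) - T\big) \le |T|$. As $(G - V(M)) - T = G - S$, this reads $o(G - S) \le |S| - 2k$.

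For the reverse implication, assume $o(G - S) \le |S| - 2k$ for every $S$ containing a $k$-matching, and let $M$ be an arbitrary $k$-matching. For any $T \subseteq V(G - V(M))$ put $S := V(M) \cup T$; this union is disjoint, so $|S| = 2k + |T|$, and $S$ contains the $k$-matching $M$, whence the hypothesis gives $o(G - S) \le |S| - 2k = |T|$, that is, $o\big((G - V(M)) - T\big) \le |T|$. By Tutte's theorem $G - V(M)$ has a perfect matching, so $M$ extends to a perfect matching of $G$; since $M$ was arbitrary, $G$ is $k$-extendable.

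I expect no serious obstacle here beyond invoking Tutte's theorem: the lemma is essentially a reparametrization between "sets $S$ containing a $k$-matching" and "pairs $(M, T)$ with $M$ a $k$-matching and $T$ disjoint from $V(M)$". The only points that need a little care are that a $k$-matching spans exactly $2k$ vertices (so that $|S| = 2k + |T|$ in the reverse direction and $|T| = |S| - 2k$ in the forward one), that the perfect matching of $G - V(M)$ produced by Tutte genuinely combines with $M$ into a perfect matching of $G$, and the degenerate case where $G$ has no $k$-matching at all, in which case both sides of the equivalence hold vacuously.
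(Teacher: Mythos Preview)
Your proof is correct. The paper does not prove this lemma at all; it simply quotes it from Chen's paper \cite{chen1995binding} as a known structural tool, so there is no in-paper argument to compare against. Your derivation via Tutte's theorem is the standard one and is exactly how the result is typically obtained: reparametrize a set $S$ containing a $k$-matching $M$ as $V(M)\cup T$ with $T$ disjoint from $V(M)$, and apply Tutte to $G-V(M)$ in both directions.
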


A Hall-type condition for bipartite graphs to be $k$-extendable was obtained by Plummer \cite{plummer1986matching}. 
For any $S \subseteq  V (G)$, let $N(S)$ be the set of all neighbors of the vertices in $S$.
\begin{lem}\cite[Thm.~2.2]{plummer1986matching}\label{plummerlemma}
Let $k \geq 1$ and let $G$ be a connected bipartite graph with parts $U$ and $W$. Then the following are equivalent:
\begin{enumerate}[(i)]  \item $G$ is $k$-extendable;\label{s1}
  \item $|U| = |W|$ and for all nonempty subsets $X$ of $U$, if $|X| \le |U| - k$, then
$|N(X)|\ge |X| + k$;\label{s2}
  \item For all $u_1, u_2, \ldots , u_k\in U$ and $w_1, w_2, \ldots , w_k\in W$, the graph $G' = G - \{u_1, \ldots , u_k,w_1, \ldots , w_k\}$ has a perfect matching.
\end{enumerate}
\end{lem}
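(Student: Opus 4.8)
The plan is to prove the equivalences in the cyclic order $(i)\Rightarrow(ii)\Rightarrow(iii)\Rightarrow(i)$, using Hall's theorem (and its deficiency version, K\"onig's theorem) as the only tool. Two of the implications are short. For $(iii)\Rightarrow(i)$: if $\{x_1y_1,\dots,x_ky_k\}$ is a $k$-matching with $x_i\in U$ and $y_i\in W$, then applying $(iii)$ with $u_i=x_i$ and $w_i=y_i$ yields a perfect matching $M'$ of $G-\{x_1,\dots,x_k,y_1,\dots,y_k\}$, and $\{x_1y_1,\dots,x_ky_k\}\cup M'$ is a perfect matching of $G$ extending it. For $(ii)\Rightarrow(iii)$: by $(ii)$ we have $|U|=|W|=:n$, so the two sides of $G'$ have equal size $n-k$; and for any nonempty $X$ in the $U$-side of $G'$ we have $|X|\le n-k$, hence $|N_{G'}(X)|\ge|N(X)|-k\ge|X|$ by $(ii)$, so $G'$ satisfies Hall's condition and has a perfect matching.

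The content is in $(i)\Rightarrow(ii)$. A $k$-extendable graph has a perfect matching $M^*$, so $|U|=|W|=:n$, and we may assume $n\ge k+1$ (otherwise the hypothesis $|X|\le n-k$ in $(ii)$ never holds). Two preliminary facts: first, $G$ is $1$-extendable, since for any edge $e\notin M^*$ the two $M^*$-edges meeting $e$ are distinct, so deleting them leaves a matching of size $n-2\ge k-1$ avoiding both ends of $e$, and $e$ together with $k-1$ of those edges is a $k$-matching, which by hypothesis extends; second, $G$ has no proper nonempty ``tight'' set on either side, i.e.\ no $T$ with $\emptyset\neq T\subsetneq U$ (or $\subsetneq W$) and $|N(T)|=|T|$ --- otherwise connectivity would provide an edge from $N(T)$ to the complement of $T$, that edge would lie in some perfect matching, and that matching could not saturate $T$.

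Now assume $(ii)$ fails and choose a counterexample $Y$ --- say $Y\subseteq U$, the case $Y\subseteq W$ being symmetric --- minimizing $t:=|N(Y)|-|Y|$; Hall gives $t\ge 0$, and the hypothesis gives $t\le k-1$. Write $A=M^*(Y)$ and $Z=N(Y)\setminus A$, so $|A|=|Y|$, $|Z|=t$, and $M^*$ matches $Z$ into $U\setminus Y$. It suffices to produce a $k$-matching $N$ with $V(N)\cap Y=\emptyset$ that covers at least $t+1$ vertices of $N(Y)$: then $Y$ violates Hall's condition in $G-V(N)$, so $N$ does not extend to a perfect matching, contradicting $(i)$. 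To build $N$, keep the $t$ edges $\{\,zM^*(z):z\in Z\,\}$ (which cover $Z$) and seek one further edge covering a vertex of $A$. If some vertex of $A$ has a neighbour in $(U\setminus Y)\setminus M^*(Z)$, use that edge. Otherwise $N(A)\subseteq Y\cup M^*(Z)$, so by the minimality of $t$ in fact $N(A)=Y\cup M^*(Z)$; since $|Y|+t\le n-1$, the set $Y\cup M^*(Z)\cup A\cup Z$ is a proper subset of $V(G)$, so by connectivity either some $z\in Z$ has a neighbour outside $Y\cup M^*(Z)$ --- which allows a slightly re-routed choice of the extra edge --- or $A\cup Z$ is a proper nonempty tight set on the $W$-side, contradicting the previous paragraph. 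In each constructive case the remaining $k-t-1$ edges of $N$ are taken from $M^*$, and the number of $M^*$-edges that must be avoided turns out to be exactly $|Y|+t+1$, so the bound $|Y|\le n-k$ leaves just enough room.

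The main obstacle I expect is precisely organising this last step: showing the ``extra edge'' can always be found (or a tight set extracted), and checking that padding $N$ up to $k$ edges never exhausts $M^*$. This is where the bound $|Y|\le n-k$, connectivity, and $k$-extendability beyond $1$-extendability all enter together. The degenerate ranges ($t=0$, or $|Y|=1$, or $n$ close to $k$) and the convention that the deleted vertices in $(iii)$ are taken distinct should be handled separately by the same considerations.
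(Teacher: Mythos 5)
The paper does not prove this lemma: it is quoted verbatim from Plummer \cite{plummer1986matching}, so there is no in-paper argument to compare yours against. Your cyclic proof is essentially correct and self-contained. The implications $(iii)\Rightarrow(i)$ and $(ii)\Rightarrow(iii)$ are fine as written. In $(i)\Rightarrow(ii)$, the existence of the perfect matching $M^*$ silently uses the standard convention that a $k$-extendable graph actually possesses a $k$-matching; under the paper's bare definition a graph with no $k$-matching is vacuously $k$-extendable and the implication would fail (e.g.\ $K_{1,2}$ with $k=2$), so state that convention. The core construction checks out: the forbidden $M^*$-edges are exactly the $|Y|$ edges meeting $Y$, the $t$ edges meeting $Z$, and the one meeting the newly used $U$-vertex, so $|Y|\le n-k$ and $t\le k-1$ leave the required $k-t-1$ padding edges. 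The one step you leave implicit is the ``re-routed'' edge in the subcase where $N(A)=Y\cup M^*(Z)$ and some $z_0\in Z$ has a neighbour $u'\notin Y\cup M^*(Z)$: take the matching $\{z_0u'\}\cup\{zM^*(z):z\in Z\setminus\{z_0\}\}\cup\{M^*(z_0)a_0\}$, where $a_0\in A$ is a neighbour of $M^*(z_0)$ --- such an $a_0$ exists precisely because minimality of $t$ forced $N(A)=Y\cup M^*(Z)\ni M^*(z_0)$. This again covers $Z\cup\{a_0\}$, hence $t+1$ vertices of $N(Y)$, avoids $Y$, and forbids the same number $|Y|+t+1$ of $M^*$-edges. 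Two small points to make explicit: the minimization of $t$ must be taken over deficient sets on \emph{both} sides (you need this to rule out $A\subseteq W$ being a strictly better counterexample), and the final subcase needs $|A\cup Z|=|Y|+t\le n-1$ so that the tight set $A\cup Z$ is proper, which your bound supplies. With those details written down, the argument closes.
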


\subsection{The distance spectral radius}\label{subsec:distance}

An elementary, but fundamental result to compare the distance spectral radii of a graph and a spanning subgraph can be obtained by the Rayleigh quotient and the Perron-Frobenius Theorem.

\begin{lem}\label{PF}
	Let $G$ be a connected graph with $u, v \in V(G)$ and $uv\notin
	E(G)$, then$$\partial(G)>\partial(G+uv).$$
\end{lem}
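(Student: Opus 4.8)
The plan is to compare $G$ with $G' = G+uv$ by exploiting the fact that adding an edge can only shorten distances: for every pair $x,y\in V(G)$ we have $d_{G'}(x,y)\le d_G(x,y)$, and this inequality is strict for the pair $(u,v)$ itself, since $d_G(u,v)\ge 2$ while $d_{G'}(u,v)=1$. Hence $D(G) - D(G')$ is a nonnegative, nonzero symmetric matrix, and in particular $D(G')\le D(G)$ entrywise with at least one strict inequality. I would first record these entrywise facts explicitly, noting that $G$ connected implies $G'$ connected, so both distance matrices are well defined and both are nonnegative irreducible matrices.

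Next I would bring in the variational characterization of the Perron eigenvalue. Let $\mathbf{x}$ be the Perron eigenvector of $D(G')$, which by Perron–Frobenius can be taken positive and normalized so that $\mathbf{x}^\top \mathbf{x}=1$. Then $\partial(G') = \mathbf{x}^\top D(G')\mathbf{x}$. Since $D(G)\ge D(G')$ entrywise and $\mathbf{x}>0$, the quadratic form satisfies $\mathbf{x}^\top D(G)\mathbf{x} \ge \mathbf{x}^\top D(G')\mathbf{x} = \partial(G')$. On the other hand, by the Rayleigh quotient bound for the symmetric matrix $D(G)$, we have $\partial(G) = \max_{\|\mathbf{z}\|=1}\mathbf{z}^\top D(G)\mathbf{z} \ge \mathbf{x}^\top D(G)\mathbf{x}$. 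Chaining these gives $\partial(G)\ge \partial(G')$; it remains to upgrade this to a strict inequality.

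For strictness I would argue as follows. Because $\mathbf{x}>0$ and $D(G)-D(G')$ has a strictly positive entry in the $(u,v)$ (and $(v,u)$) position, the term $\mathbf{x}^\top\big(D(G)-D(G')\big)\mathbf{x}$ contains the strictly positive summand $2\,x_u x_v\big(d_G(u,v)-1\big)>0$, so in fact $\mathbf{x}^\top D(G)\mathbf{x} > \partial(G')$. Combined with $\partial(G)\ge \mathbf{x}^\top D(G)\mathbf{x}$ from the Rayleigh quotient, we conclude $\partial(G) > \partial(G')$, which is the claim. (Alternatively, one can quote the Perron–Frobenius monotonicity principle directly: for irreducible nonnegative matrices $A\ge B\ge 0$ with $A\ne B$, the Perron eigenvalue of $A$ strictly exceeds that of $B$.)

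I do not expect a genuine obstacle here; the only point requiring a little care is making sure the strict inequality is harvested correctly — it does not come from the Rayleigh step (which is only $\ge$) but from the strict entrywise gap evaluated against the \emph{positive} Perron vector of the smaller matrix, so the order of the estimates matters. I would also make sure to state up front that connectivity of $G$ guarantees $d_G(u,v)$ is finite and at least $2$ (the latter because $uv\notin E(G)$), so that the gap $d_G(u,v)-1$ is a positive integer.
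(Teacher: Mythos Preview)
Your proof is correct and follows essentially the same approach as the paper: take the positive Perron eigenvector $\mathbf{x}$ of $D(G+uv)$, write $D(G)=D(G+uv)+M$ with $M$ nonnegative and nonzero, and combine the Rayleigh quotient bound $\partial(G)\ge \mathbf{x}^\top D(G)\mathbf{x}$ with the strict inequality $\mathbf{x}^\top M\mathbf{x}>0$ coming from the positivity of $\mathbf{x}$. Your write-up is a bit more explicit about where the strictness enters, but the argument is the same.
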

\begin{proof}
Let $\mathbf{x}$ be a Perron eigenvector for $\partial(G+uv)$, so that $\mathbf{x}$ is positive in all entries. Note that $D(G)=D(G+uv)+M$, where $M$ is a nonzero nonnegative matrix. Then
\begin{align*}
\partial(G)&\ge \frac{\mathbf{x}^\top (D(G+uv)+M) \mathbf{x}}{\mathbf{x}^\top \mathbf{x}}=\frac{\mathbf{x}^\top D(G+uv) \mathbf{x}}{\mathbf{x}^\top \mathbf{x}}+\frac{\mathbf{x}^\top M \mathbf{x}}{\mathbf{x}^\top \mathbf{x}}\\
&> \frac{\mathbf{x}^\top D(G+uv) \mathbf{x}}{\mathbf{x}^\top \mathbf{x}}=\partial(G+uv).
\end{align*}
\end{proof}

We also need a result mentioned as Claim 1 of the proof of Theorem 1.1 in \cite{zhang2021perfect}.
\begin{lem}\cite[pp. 317–319 ]{zhang2021perfect} \label{bh}
	Let $ p\ge 2$ and $ n_i\ge 1 $ for $i=1,\dots,p$.  If $  \sum\limits_{i=1}\limits^{p} n_i =n-s$ where $s\ge 1$, then
	$$ \partial(K_s\vee (K_{n_1} \cup  K_{n_2}\cup \cdots \cup K_{n_p}))\ge \partial(K_s\vee  (K_{n-s-p+1}\cup (p- 1)K_1))$$
	with equality if and only if $n_i=1$ for $i=2,\ldots,p$.
\end{lem}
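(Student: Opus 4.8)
The plan is to keep the number of parts fixed at $p$ while gradually concentrating the vertices outside $K_s$ into a single clique, through a \emph{merging} step, and to show each such step strictly decreases the distance spectral radius. Two facts will be used throughout. First, every graph $K_s\vee(K_{m_1}\cup\cdots\cup K_{m_p})$ has diameter at most $2$, since $s\ge1$ forces any two non-adjacent vertices to have a common neighbour in $K_s$; hence its distance matrix is $D=2(J-I)-A$ with $A$ the adjacency matrix, so that for two such graphs $G_1,G_2$ on the same vertex set, $D(G_1)-D(G_2)=A(G_2)-A(G_1)$. Second, since the graph only depends on the multiset $\{n_1,\dots,n_p\}$, we may assume $n_1\ge n_2\ge\cdots\ge n_p$; if $n_2=1$ then $n_1=n-s-p+1$, the two graphs in the statement coincide, and equality holds, so from now on $n_2\ge 2$.

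A merging step takes $G_1=K_s\vee(K_a\cup K_b\cup H)$, with $a\ge b\ge 2$ and $H$ the union of the remaining cliques, to $G_2=K_s\vee(K_{a+b-1}\cup K_1\cup H)$ on the same vertex set: pick a vertex $v$ of the $b$-clique $B$, delete the $b-1$ edges joining $v$ to $B\setminus\{v\}$, and add the $a(b-1)$ edges joining $B\setminus\{v\}$ to the $a$-clique $A$. Let $\mathbf x$ be the unit Perron eigenvector of $D(G_2)$; it is positive and, by symmetry, equals some constant $\alpha$ on the $(a+b-1)$-clique $A\cup(B\setminus\{v\})$ and some value $\beta$ at $v$. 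Using the two facts above and the description of $E(G_2)$ relative to $E(G_1)$,
\begin{align*}
\partial(G_1)&\ge \mathbf x^{\top}D(G_1)\mathbf x=\mathbf x^{\top}D(G_2)\mathbf x+\mathbf x^{\top}\bigl(A(G_2)-A(G_1)\bigr)\mathbf x\\
&=\partial(G_2)+2\Bigl(\sum_{u\in A,\,w\in B\setminus\{v\}}x_ux_w-\sum_{w\in B\setminus\{v\}}x_vx_w\Bigr)=\partial(G_2)+2(b-1)\,\alpha(a\alpha-\beta),
\end{align*}
so everything reduces to proving $a\alpha-\beta>0$.

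For this I would subtract the eigenvalue equations $(D(G_2)\mathbf x)_v=\partial(G_2)\beta$ and $(D(G_2)\mathbf x)_u=\partial(G_2)\alpha$ for $u$ in the big clique; both involve the full sum $\mathbf 1^{\top}\mathbf x$ and the same contribution from $K_s$, which cancel, leaving $(\partial(G_2)+2)\beta=(\partial(G_2)+a+b)\alpha$, whence
$$a\alpha-\beta=\frac{(a-1)\,\partial(G_2)+(a-b)}{\partial(G_2)+2}\,\alpha.$$
Since $G_2$ is connected on at least two vertices, $\partial(G_2)>0$, while $a\ge b\ge 2$ gives $(a-1)\partial(G_2)>0$ and $a-b\ge 0$; hence $a\alpha-\beta>0$ and $\partial(G_1)>\partial(G_2)$. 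Finally, a merging step keeps $\sum_i n_i=n-s$ and the number of parts equal to $p$ while strictly increasing $\max_i n_i$ (bounded above by $n-s-p+1$), so after finitely many steps the partition becomes $(n-s-p+1,1,\dots,1)$; chaining the strict inequalities, together with the first reduction, proves the lemma and shows that equality holds precisely when (after relabelling) $n_2=\cdots=n_p=1$. I expect the sign of $a\alpha-\beta$ to be the only genuine obstacle; the point of always taking the new isolated vertex $v$ from the smaller of the two merged cliques (so that $a\ge b$) is exactly to make the numerator $(a-1)\partial(G_2)+(a-b)$ visibly positive.
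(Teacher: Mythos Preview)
The paper does not supply its own proof of this lemma; it is quoted verbatim as Claim~1 from the proof of Theorem~1.1 in Zhang and Lin~\cite{zhang2021perfect}, so there is nothing in the present paper to compare against.

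Your argument is correct and self-contained. The key computations check out: for $v$ the isolated vertex in $G_2$ and $u$ in the $(a+b-1)$-clique, subtracting the two eigen-equations indeed yields $(\partial(G_2)+2)\beta=(\partial(G_2)+a+b)\alpha$, and hence $a\alpha-\beta=\alpha\bigl((a-1)\partial(G_2)+(a-b)\bigr)/(\partial(G_2)+2)>0$ once $a\ge b\ge2$. The Rayleigh step, the diameter-$2$ identity $D=2(J-I)-A$, and the description of $E(G_2)\setminus E(G_1)$ and $E(G_1)\setminus E(G_2)$ are all accurate, giving the strict drop $\partial(G_1)-\partial(G_2)\ge 2(b-1)\alpha(a\alpha-\beta)>0$. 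Termination is immediate since each merging step increases the number of parts equal to~$1$ by one (your phrasing via $\max_i n_i$ also works provided you always merge into the current largest part), and the equality characterisation follows because any configuration with two parts of size at least~$2$ admits a strict step.

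One cosmetic remark: when you invoke ``by symmetry'' for the constancy of the Perron vector on the $(a+b-1)$-clique, you are really using that this clique is a cell of an equitable partition of $G_2$ and that the Perron eigenvector respects such partitions, exactly as the paper does in its main proofs; you might cite this the same way for consistency.
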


\section{Extendability and the distance spectral radius of graphs}\label{Sec:proof1}

Using the Tutte-type characterization in Lemma \ref{chenlemma} and the lemmas in Section \ref{subsec:distance} on the distance spectral radius, we will now prove our main result.

\begin{thm}\label{thm1}
 Let $k\ge 1$ and $n\ge k+1$. Let $G$ be a connected graph of order $2n$. If $$\partial(G)\le \partial(K_{2k}\vee (K_{2n-2k-1}\cup K_1)),$$ then $G$ is $k$-extendable unless $G= K_{2k}\vee (K_{2n-2k-1}\cup K_1)$.
\end{thm}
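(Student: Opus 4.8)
The plan is to argue by contradiction: suppose $G$ is a connected graph of order $2n$ with $\partial(G)\le\partial(K_{2k}\vee(K_{2n-2k-1}\cup K_1))$ that is not $k$-extendable and $G\neq K_{2k}\vee(K_{2n-2k-1}\cup K_1)$. By Lemma \ref{chenlemma}, there is a set $S\subseteq V(G)$ containing a $k$-matching with $o(G-S)\ge|S|-2k+1$. Set $s=|S|$; since $S$ contains a $k$-matching, $s\ge 2k$. A parity argument on the order: $o(G-S)$ and $|S|=s$ have the same parity as $2n$, so $o(G-S)$ and $s$ have the same parity, forcing $o(G-S)\ge s-2k+2$. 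Write $o(G-S)=p$, so $p\ge s-2k+2\ge 2$. The key structural reduction is that, using Lemma \ref{PF} (adding edges strictly decreases $\partial$), $G$ is a spanning subgraph of the graph $H$ obtained by making $S$ a clique, joining $S$ completely to all of $G-S$, and turning each component of $G-S$ into a clique; we may further merge all the even components into one of the odd components (this keeps $p$ odd components, and $o$ does not increase) and assume the odd components have sizes $n_1\ge n_2\ge\cdots\ge n_p\ge 1$ with $\sum n_i=2n-s$. Hence $\partial(G)\ge\partial(K_s\vee(K_{n_1}\cup\cdots\cup K_{n_p}))$, with equality only if $G$ already has this form.

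Next I would invoke Lemma \ref{bh} to push all but the largest odd component down to a single vertex: $\partial(K_s\vee(K_{n_1}\cup\cdots\cup K_{n_p}))\ge\partial(K_s\vee(K_{2n-s-p+1}\cup(p-1)K_1))$, with equality iff $n_2=\cdots=n_p=1$. So it suffices to analyze the family $G_{s,p}:=K_s\vee(K_{2n-s-p+1}\cup(p-1)K_1)$ over the integer parameters $s,p$ subject to $p\ge 2$, $p\equiv s\pmod 2$, $s\ge 2k$, $p\le s-2k+2$ (equivalently $2k\le s\le s$, and $p$ ranges in $[2,s-2k+2]$), and $2n-s-p+1\ge 1$. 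The target graph $K_{2k}\vee(K_{2n-2k-1}\cup K_1)$ is exactly $G_{2k,2}$. The heart of the proof is then the claim that among all admissible $(s,p)$ one has $\partial(G_{s,p})\ge\partial(G_{2k,2})$, with equality iff $(s,p)=(2k,2)$; combined with the two equality discussions above this forces $G=G_{2k,2}$, the contradiction.

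To prove that extremal claim I would proceed in two monotonicity steps. First, fix $s$ and compare $p$ with $p+2$ (parity forces steps of $2$): show $\partial(G_{s,p})>\partial(G_{s,p+2})$ whenever both are admissible — intuitively, going from $p$ to $p+2$ replaces $K_{2n-s-p+1}\cup(p-1)K_1$ by $K_{2n-s-p-1}\cup(p+1)K_1$, i.e. removes two vertices from the big clique and adds two isolated vertices to the join side, which should be handled either by a direct Perron-eigenvector/Rayleigh-quotient estimate or by exhibiting an intermediate edge-addition. This reduces us to $p=2$ (if $s$ is even) or $p=3$ (if $s$ is odd). Second, among these, decrease $s$: show $\partial(K_s\vee(K_{2n-s-1}\cup K_1))$ is decreasing in $s$ down to $s=2k$, and that the odd-$s$ representatives $K_s\vee(K_{2n-s-2}\cup 2K_1)$ all have strictly larger distance spectral radius than $G_{2k,2}$ (here one compares a graph on the "$p=3$, $s$ odd" branch with $G_{2k,2}$; since $s\ge 2k+1$ here one can often go via $K_{2k}\vee(K_{2n-2k-2}\cup 2K_1)$ and then use Lemma \ref{bh} in reverse together with the $p$-monotonicity).

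The main obstacle is precisely these two monotonicity computations for the one-parameter families $G_{s,p}$: the distance matrices are not quite as clean as adjacency matrices because the two isolated-side vertices are at distance $2$ from each other and from the big clique, so one cannot simply read off eigenvalues. I expect to handle them by writing the Perron eigenvector in equitable-partition form (constant on $S$, on the big clique, and on the set of $p-1$ isolated vertices), reducing $\partial(G_{s,p})$ to the largest root of an explicit $3\times 3$ (or $2\times2$ after using symmetry) characteristic polynomial in $n,s,p$, and then comparing these polynomials — or, more slickly, by using the Rayleigh quotient with the Perron vector of the smaller graph to get the strict inequality in one direction and Lemma \ref{PF}/Lemma \ref{bh} for the equality cases. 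Care must be taken with the small cases (e.g. $2n-s-p+1=1$, where the "big clique" degenerates) and with verifying $n\ge k+1$ is enough for all invoked graphs to be well-defined and connected.
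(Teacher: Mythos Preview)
Your structural reduction (Chen's lemma, parity, spanning subgraph of $K_s\vee(K_{n_1}\cup\cdots\cup K_{n_p})$, Lemma~\ref{bh}) matches the paper. The gap is in your two-parameter endgame: the monotonicity you state for $p$ is reversed. Passing from $G_{s,p}$ to $G_{s,p+2}$ removes edges (two vertices leave the big clique and become isolated on the join side), and by Lemma~\ref{PF} removing edges \emph{increases} the distance spectral radius; hence $\partial(G_{s,p})<\partial(G_{s,p+2})$, not the inequality you wrote. Consequently you cannot reduce to $p=2$ or $p=3$ for arbitrary $s$; the minimum of $\partial(G_{s,p})$ in $p$ (for fixed $s$) is at the \emph{smallest} admissible $p$, namely $p=s-2k+2$. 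This collapses the two-parameter family to the one-parameter family $G^{(s)}:=K_s\vee(K_{2n-2s+2k-1}\cup(s-2k+1)K_1)$, which is exactly the reduction the paper performs (they phrase it as merging surplus odd components in pairs).

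Your ``second step'' is then also off target: on the line $p=2$ the graph $K_s\vee(K_{2n-s-1}\cup K_1)$ is $K_{2n}$ with the edges from one vertex to $2n-s-1$ others deleted, so it gets \emph{denser} as $s$ grows and $\partial$ is decreasing in $s$ there --- the minimum is not at $s=2k$. The relevant comparison is along $p=s-2k+2$, where increasing $s$ sparsifies and $\partial$ increases. The paper does not prove this monotonicity in $s$; instead it takes the Perron eigenvector $\mathbf{x}$ of $D(G^{(2k)})$, writes it in the refined equitable partition of $G^{(s)}$, and shows directly that $\mathbf{x}^\top(D(G^{(s)})-D(G^{(2k)}))\mathbf{x}>0$ for each $s\ge 2k+1$, with a small separate computation (quotient-matrix characteristic polynomials) for the boundary case $n=k+2$. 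That direct Rayleigh-quotient comparison is what you should aim for once the $p$-direction is corrected.
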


\begin{proof}
Note that $K_{2k}\vee (K_{2n-2k-1}\cup K_1)$ is not $k$-extendable.
We will prove that if $G$ is not $k$-extendable, then $\partial(G)\ge \partial(K_{2k}\vee (K_{2n-2k-1}\cup K_1))$ with equality only if $G = K_{2k}\vee (K_{2n-2k-1}\cup K_1)$.

Suppose that $G$ is not $k$-extendable with $2n$ vertices where $n\ge k+1$, then by Lemma \ref{chenlemma}, there exists some nonempty subset $S$ of $V (G)$, say of size $s$, such that $s\ge  2k$ and $o(G-S) > s-2k$.
Because $G$ has an even order, $o(G-S)$ and $s$ have the same parity, so we have $o(G-S) \ge  s-2k + 2.$
We may assume that all components of $G -S$ are odd, otherwise, we can move one vertex from each even component to the set $S$, and consequently, the
number of odd components and the size of $S$ increase by the same amount, so that all assumption remains valid.
We may also assume that the number of odd components equals $s-2k+2$, for additional odd components (of which there are an even number) may be added to one of the other odd components (as we will not use that a component is connected).  Let the odd number $n_i$ be the cardinality of the $i$-th odd component of $G-S$.
It is clear then that $G$ is a spanning subgraph of $$ K_s\vee (K_{n_1} \cup  K_{n_2}\cup \cdots \cup K_{n_{s-2k+2}})
$$ for some odd integers $ n_1 ,n_2  , \ldots , n_{s-2k+2} $ and $  \sum_{i=1}^{s-2k+2} n_i=2n-s $. By Lemma \ref{PF}, we have
\begin{align*}
\partial(G)\ge \partial(K_s\vee (K_{n_1} \cup  K_{n_2}\cup \cdots \cup K_{n_{s-2k+2}}))
\end{align*}
where equality holds if and only if $G= K_s\vee (K_{n_1} \cup  K_{n_2}\cup \cdots \cup K_{n_{s-2k+2}}).$ 
Write $$G^{(s)}= K_s\vee  (K_{2n-2s+2k-1}\cup (s-2k+1)K_1).$$ By Lemma \ref{bh}, it is clear that
\begin{align*}
\partial(G)\ge \partial(G^{(s)})
\end{align*}
where equality holds if and only if $G=G^{(s)}.$
Note that $2n=s+\sum_{i=1}^{s-2k+2} n_i\ge 2s-2k+2$ and $G^{(2k)}= K_{2k}\vee (K_{2n-2k-1}\cup K_1)$. As the latter is our claimed extremal graph, let us denote its distance spectral radius $\partial(G^{(2k)})$ by $\partial^*$.
The main idea of the following is to show that $\partial(G^{(s)})>\partial^*$ when $n\ge s-k+1$ and $s\ge 2k+1$. 
Let $\mathbf{x}$ be the unit Perron eigenvector of $D(G^{(2k)})$, hence $D(G^{(2k)})\mathbf{x}=\partial^*\mathbf{x}$. It is well-known that $\mathbf{x}$ is constant on each part corresponding to an equitable partition \cite[\S 2.5]{brouwer2011spectra}. 
Thus we may set
\begin{align*}
	\mathbf{x}=( \underbrace{a,\ldots, a}_{2k},\underbrace{b,\ldots, b}_{2n-2k-1},c)^\top
\end{align*}
where $a,b,c\in \mathbb{R}^{+}$.
In order to compare the appropriate spectral radii, we refine the partition, and write $\mathbf{x}$ as follows:
 \begin{align*}
 	\mathbf{x}=(\underbrace{a,\ldots, a}_{2k},\underbrace{b,\ldots, b}_{s-2k},\underbrace{b,\ldots, b}_{2n-2s+2k-1},\underbrace{b,\ldots, b}_{s-2k},c)^\top.
 \end{align*}
Accordingly, $D(G^{(s)})-D(G^{(2k)})$ is partitioned as
{\small \begin{align*}
    \begin{bNiceMatrix}[first-row,first-col]
	& 2k& s-2k&2n-2s+2k-1&s-2k&1\cr
2k&0&0&0&0&0\cr
s-2k&0&0&0&0&-J\cr
2n-2s+2k-1&0&0&0&J&0\cr
s-2k&0&0&J&J-I&0\cr
1&0&-J&0&0&0\cr
    \end{bNiceMatrix}
\end{align*}}
where each $J$ is an all-ones matrix of appropriate size and $I$ is an identity matrix.
Then we have
\begin{align*}%\label{t1eq}
\partial(G^{(s)})-\partial^*&\ge\mathbf{x}^\top(D(G^{(s)})-D(G^{(2k)}))\mathbf{x}\\
&=(s-2k)b\cdot[(4n-3s+2k-3)b-2c].
\end{align*}
Note that $s\ge 2k+1$ and $b>0$, hence it suffices to show that 
\begin{align}\label{ineq:bc}
    (4n-3s+2k-3)b-2c>0.
\end{align}
From the equation $D(G^{(2k)})\mathbf{x}=\partial^*\mathbf{x}$, we obtain that
\begin{align*}
\left\{
\begin{array}{ll}
\partial^* \cdot a=(2k-1)a+(2n-2k-1)b+c\\
\partial^*\cdot b=2ka+(2n-2k-2)b+2c\\
\partial^*\cdot c=2ka+2(2n-2k-1)b\\
\end{array}
\right.
\end{align*}
which implies
$$c=\left(1+\frac{2n-2k-2}{\partial^*+2}\right)b.$$
By substituting this and $s \le n+k-1$ into \eqref{ineq:bc}, it follows that instead of the latter, it suffices to prove that
$$\partial^*> 2+\frac{4}{n-k-2},$$
unless $n=k+2$. But this easily follows from the bound $\partial^*> \min\limits_{i} r_i(D(G^{(2k)}))=2n-1$, where $r_i(A)$ denotes the $i$th row sum of a matrix $A$  (note that $n \geq 4$ if $n \ge k+3$). Note that there is a strict inequality in this bound because the row sums are not constant; we need this strict inequality below.

Thus, except for the case $n=k+2$, which only occurs in conjunction with $n=s-k+1$, the proof is finished.
For the remaining case, the above approach does not work, and the distance spectral radius is quite close to the claimed optimal value. 
If $n=k+2 $ and $n=s-k+1$, then $G^{(s)}=K_{2k+1}\vee 3K_1$, whereas $G^{(2k)}=K_{2k}\vee (K_3\cup K_1)$. Thus, for $G^{(s)}$, there is a clear equitable partition with two parts, and $\partial(G^{(s)})$ is the largest eigenvalue of the corresponding quotient matrix (of $D(G^{(s)})$)
$$\begin{bmatrix}
2k&3\\
2k+1&4
\end{bmatrix}$$
 which has characteristic polynomial 
\begin{align*}
	\phi_s(x)=x^2 - (2k+4)x + 2k - 3.
\end{align*}
% $$\partial(G^{(s)})=\sqrt{k^2 + 2k + 7}+ k +2.$$
Similarly, $\partial^*$ is the largest eigenvalue of the quotient matrix
 $$\begin{bmatrix}
2k-1&3&1\\
2k&2&2\\
2k&6&0
\end{bmatrix}$$
of $D(G^{(2k)})$, and hence $\partial^*$ is the largest root of the characteristic polynomial
\begin{align*}
\phi(x)&=x^3 - (2k + 1)x^2 - (4k+14)x + 4k - 12.
\end{align*}
Next, we let $\varphi_s(x)=(x+3)\phi_s(x).$ In this way, we make sure that $\varphi_s(x)-\phi(x)=-x+2k+3$. As noted before, we have that $\partial^*>2n-1=2k+3$ and hence
\begin{align*}
\phi_s(\partial^*)=\tfrac{1}{\partial^*+3}\varphi_s(\partial^*)=\tfrac{1}{\partial^*+3}(\varphi_s(\partial^*)-\phi(\partial^*))=\tfrac{1}{\partial^*+3}(-\partial^*+2k+3)<0,
\end{align*}
which implies that $\partial(G^{(s)})>\partial^*$.
% In order to show that $\partial(G^{(s)})>\partial^*$, we first observe that $$\phi(\partial(G^{(s)}))=\sqrt{k^2 + 2k + 7} -k- 1>0.$$
% To exclude the option that $\partial(G^{(s)})$ lies between the smallest and middle root of $\phi$, we note that $k+2$ lies between the middle and largest root, because $\phi(k+2)=-k^3 - 7k^2 - 18k - 36<0$ and $\phi'(k+2)=-k^2 - 2k - 6<0$. Because  $\partial(G^{(s)})>k+2$, it now follows that $\partial(G^{(s)})>\partial^*$.
\end{proof}

\section{{Bipartite graphs}}\label{Sec:proof2}

We will next restrict our attention to bipartite graphs. Instead of the Tutte-type characterization, we will now use the 
Hall-type characterization in Lemma \ref{plummerlemma}.

\begin{thm}\label{thm2}
Let $k\ge 1$ and $n\ge k+1$. Let $G$ be a connected balanced bipartite graph of order $2n$. If $$\partial(G)\le \partial(K_{n-k,n-1}\diamond K_{k,1}),$$ then $G$ is $k$-extendable unless $G = K_{n-k,n-1}\diamond K_{k,1}$.
\end{thm}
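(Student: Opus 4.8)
The plan is to mirror the structure of the proof of Theorem \ref{thm1}, but with the Hall-type characterization of Lemma \ref{plummerlemma} replacing the Tutte-type one. Concretely, I would argue the contrapositive: assume $G$ is a connected balanced bipartite graph of order $2n$ that is \emph{not} $k$-extendable, and show $\partial(G) \ge \partial(K_{n-k,n-1}\diamond K_{k,1})$ with equality only for $G = K_{n-k,n-1}\diamond K_{k,1}$. By Lemma \ref{plummerlemma}\eqref{s2}, non-extendability gives a nonempty $X \subseteq U$ with $|X| \le |U| - k = n-k$ but $|N(X)| \le |X| + k - 1$. Write $|X| = t$ (so $1 \le t \le n-k$) and $|N(X)| = t + k - 1 =: r$. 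Since all edges from $X$ stay inside $N(X)$, and edges from $W \setminus N(X)$ can only go to $U \setminus X$, the graph $G$ is a spanning subgraph of $K_{t,r} \diamond K_{k+1, n-t}$ (identifying: one part $X$ of size $t$, then $N(X)$ of size $r = t+k-1$, then $U\setminus X$ of size $n-t$, then $W \setminus N(X)$ of size $n - r = n - t - k + 1$; the ``diamond'' join adds all edges between $N(X)$ and $U \setminus X$). By Lemma \ref{PF}, $\partial(G) \ge \partial(K_{t,t+k-1} \diamond K_{k+1,n-t})$ with equality iff $G$ equals this graph.

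Next I would reduce the two-parameter family to the claimed extremal graph. One parameter is pinned down first: among $K_{t, t+k-1}\diamond K_{k+1,n-t}$ I expect the minimum of $\partial$ over the relevant range to be attained at the extreme value of $t$, and the target graph $K_{n-k,n-1}\diamond K_{k,1}$ corresponds to $t = n-k$ (giving parts of sizes $n-k$, $n-1$, $k$, $1$). So the bulk of the work is to prove $\partial(K_{t,t+k-1}\diamond K_{k+1,n-t}) > \partial(K_{n-k,n-1}\diamond K_{k,1})$ for every $t$ with $1 \le t \le n-k-1$. For this I would imitate the eigenvector-comparison trick from Theorem \ref{thm1}: let $\mathbf{x}$ be the unit Perron eigenvector of $D(K_{n-k,n-1}\diamond K_{k,1})$, which is constant on the four cells $n-k$, $n-1$, $k$, $1$ of the equitable partition; refine this partition to match the cells $t$, $t+k-1$, $n-t$, $n-t-k+1$ of the competitor; compute the difference $D(K_{t,t+k-1}\diamond K_{k+1,n-t}) - D(K_{n-k,n-1}\diamond K_{k,1})$ cell-by-cell (the distances in a $\diamond$ of complete bipartite graphs are all $1$, $2$, or $3$, and easy to tabulate), and bound $\partial(\text{competitor}) - \partial^* \ge \mathbf{x}^\top(D(\text{competitor}) - D(\text{extremal}))\mathbf{x}$. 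This should reduce to a linear inequality among the Perron-eigenvector entries, which in turn is handled by extracting the entry ratios from the four quotient-matrix eigenequations $D\mathbf{x} = \partial^* \mathbf{x}$, exactly as the $c = (1 + \tfrac{2n-2k-2}{\partial^*+2})b$ step does in Theorem \ref{thm1}, together with the crude bound $\partial^* > \min_i r_i(D) $ on the smallest row sum.

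The main obstacle, as in the proof of Theorem \ref{thm1}, will be the boundary/degenerate cases where the eigenvector-comparison estimate is too weak — i.e., small values of $n - k$, or the competitor $t = n-k-1$ which is ``closest'' to the extremal graph, and possibly small components collapsing (e.g. $n - t - k + 1 = 1$ or $k = $ small). For those finitely many configurations the clean inequality fails and I would instead compute the characteristic polynomials of the (small, $3$- or $4$-dimensional) quotient matrices explicitly and compare their largest roots directly, using a polynomial-multiplier gadget of the form $\varphi(x) = (x + \alpha)\phi_{\text{competitor}}(x)$ chosen so that $\varphi(x) - \phi_{\text{extremal}}(x)$ is linear and sign-definite on $(2n-1, \infty)$, again exploiting $\partial^* > 2n-1$ strictly. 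I also need to check separately the trivial endpoint $t = n-k$ itself to confirm it yields precisely $K_{n-k,n-1}\diamond K_{k,1}$ and hence the equality case, and to verify that $K_{n-k,n-1}\diamond K_{k,1}$ is itself not $k$-extendable (take $X$ = the $n-k$-part: $|N(X)| = n-1 < (n-k) + k$), so that the statement is not vacuous.
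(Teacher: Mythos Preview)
Your overall strategy matches the paper's proof almost exactly: contrapositive via Lemma~\ref{plummerlemma}, embed $G$ into a canonical supergraph $B^{(t)}=K_{t,t+k-1}\diamond K_{n-t,n-t-k+1}$, then compare $\partial(B^{(t)})$ to $\partial^*$ via the Rayleigh-quotient/eigenvector trick, with leftover boundary cases handled by quotient-matrix characteristic polynomials.

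There is, however, one genuine gap. You propose to prove the \emph{strict} inequality $\partial(B^{(t)})>\partial^*$ for all $1\le t\le n-k-1$, but this is false at $t=1$: reversing the four cells shows $K_{a,b}\diamond K_{c,d}\cong K_{d,c}\diamond K_{b,a}$, hence $B^{(t)}\cong B^{(n-k+1-t)}$, and in particular $B^{(1)}\cong B^{(n-k)}=K_{n-k,n-1}\diamond K_{k,1}$. So $t=1$ is another equality case, not a strict one. The paper exploits exactly this symmetry to restrict to $\tfrac12(n-k+1)\le t\le n-k-1$, which (apart from the single exceptional pair $n=k+3,\ t=2$) forces $t\ge 3$. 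That lower bound is what makes the eigenvector comparison go through cleanly: the difference $\mathbf z^\top(D(B^{(t)})-D(B^{(1)}))\mathbf z$ comes out as $4(n-t-k)a_1(tb_1-b_2)$, and $tb_1-b_2>0$ is immediate from the eigen-equations once $t\ge 3$, while for $t=2$ the analogous expression $\partial^*(2b_1-b_2)$ picks up a negative $-(n-k)a_1$ term and is not obviously positive. Without the symmetry you would be stuck not just at $t=1$ but with an infinite family of $t=2$ cases rather than a single one.

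Two small side remarks: your supergraph should be written $K_{t,t+k-1}\diamond K_{n-t,\,n-t-k+1}$ (your ``$K_{k+1,n-t}$'' has the wrong sizes and is not even balanced, though your verbal description of the four cells is correct); and in the bipartite setting the crude row-sum bound is $\partial^*>3n-2$ (not $2n-1$), which is what the paper uses in the final polynomial comparison.
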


\begin{proof}
Note that the bipartite graph $K_{n-k,n-1}\diamond K_{k,1}$ is not $k$-extendable.
We will prove that if $G$ is bipartite but not $k$-extendable, then $\partial(G)\ge \partial(K_{n-k,n-1}\diamond K_{k,1})$ with equality only if $G \cong K_{n-k,n-1}\diamond K_{k,1}$.

Let $G$  be a balanced connected bipartite graph with parts $U$ and $W$ (each of size $n$).
Suppose that $G$ is not $k$-extendable, then by Lemma \ref{plummerlemma}, there exists some nonempty subset $X$, say of size $s$, of $U$ such that $|N(X)|\le s + k-1$ and $s\le n-k$. We now proceed in a similar way as in Section \ref{Sec:proof1}.
Here we have that $G$ is a spanning subgraph of $$B^{(s)}= K_{s,s+k-1}\diamond K_{n-s,n-s-k+1},$$
and, by Lemma \ref{PF}, we have that
\begin{eqnarray*}
\partial(G)\ge \partial(B^{(s)})
\end{eqnarray*}
where equality holds if and only if $G \cong B^{(s)}$.

It is clear that $B^{(1)} \cong B^{(n-k)}=K_{n-k,n-1}\diamond K_{k,1}$, which is our claimed extremal graph, and let us denote its distance spectral radius $\partial(B^{(1)})$ by $\partial^*$. 
Note that more generally, $B^{(s)} \cong B^{(n-k+1-s)}$, so it suffices to show that $\partial(B^{(s)})>\partial^*$ for $ \frac{1}{2}(n-k+1) \le s \le n-k-1$. Note that such $s$ only occur when $n \geq k+3$.

Let $\mathbf{z}$ be the unit Perron eigenvector of $D(B^{(1)})$, hence $D(B^{(1)})\mathbf{z}=\partial^*\mathbf{z}$. As before,  $\mathbf{z}$ is constant on each part corresponding to an equitable partition \cite[\S 2.5]{brouwer2011spectra}, hence we may set
\begin{align*}
	\mathbf{z}=( \underbrace{a_1,\ldots, a_1}_{n-k},\underbrace{a_2,\ldots, a_2}_{k},\underbrace{b_1,\ldots, b_1}_{n-1},b_2)^\top
\end{align*}
where $a_1,a_2,b_1,b_2\in \mathbb{R}^{+}$. 
Again, we refine the partition, and write
 \begin{eqnarray*}
 		\mathbf{z}=( \underbrace{a_1,\ldots, a_1}_{s}, \underbrace{a_1,\ldots, a_1}_{n-k-s},\underbrace{a_2,\ldots, a_2}_{k},\underbrace{b_1,\ldots, b_1}_{s+k-1},\underbrace{b_1,\ldots, b_1}_{n-s-k},b_2)^\top.
 \end{eqnarray*}
Accordingly, we can partition $D(B^{(s)})-D(B^{(1)})$ as
 {\small \begin{align*}
    \begin{bNiceMatrix}[first-row,first-col]
&s&n-k-s&k&s+k-1&n-s-k&1\cr
s&0&0&0&0&2J&0\cr
n-k-s&0&0&0&0&0&-2J\cr
k&0&0&0&0&0&0\cr
s+k-1&0&0&0&0&0&0\cr
n-s-k&2J&0&0&0&0&0\cr
1&0&-2J&0&0&0&0\cr
    \end{bNiceMatrix}
\end{align*}}
and obtain that
\begin{align}\label{beq}
\partial(B^{(s)})-\partial^*&\ge \mathbf{z}^\top(D(B^{(s)})-D(B^{(1)})\mathbf{z} \notag \\ 
&= 4(n-s-k)a_1(sb_1-b_2).
\end{align}
Since $D(B^{(1)})\mathbf{z}=\partial^*\mathbf{z}$, we find that
\begin{align*}
\left\{
  \begin{array}{ll}
\partial^*\cdot b_1=(n-k)a_1+ka_2+2(n-2)b_1+2b_2,\\
\partial^*\cdot b_2=3(n-k)a_1+ka_2+2(n-1)b_1.\\
  \end{array}
\right.
\end{align*}

Because $s \ge \frac{1}{2}(n-k+1)$, we may assume that $s \geq 3$, except for the case that both $n=k+3$ and $s=2$ (which case we will discuss below).
If indeed $s \ge 3$, then
\begin{align*}
\partial^*\cdot (sb_1-b_2)\ge \partial^*\cdot (3b_1-b_2)={2ka_2+(4n-10)b_1+6b_2}>0,
\end{align*}
and hence \eqref{beq} shows that $\partial(B^{(s)})-\partial^*>0.$
Therefore, except for the case $s=2$ and $n=k+3$, the proof is finished.

In the remaining case, we will again consider the quotient matrices and their characteristic polynomials. 
We now have $B^{(2)}= K_{2,k+1}\diamond K_{k+1,2}$, which has an equitable partition with four parts, and $\partial(B^{(2)})$ is the largest eigenvalue of the corresponding quotient matrix 
$${\small\begin{bmatrix}
2&2k+2&k+1&6\\
4&2k&k+1&2\\
2&k+1&2k&4\\
6&k+1&2k+2&2
\end{bmatrix}},$$
which has characteristic polynomial
\begin{align*}
\phi_2(x)=x^4 - (4k+4)x^3 + (3k^2 - 6k - 53)x^2 + (12k^2 + 88k - 68)x- 36k^2 + 72k - 20.
\end{align*}
Similarly, $B^{(1)}= K_{3,k+2}\diamond K_{k,1}$ has quotient matrix (of $D(B^{(1)})$) 
$${\small\begin{bmatrix}
0&2k+4&k&9\\
2&2k+2&k&3\\
1&k+2&2k-2&6\\
3&k+2&2k+2&4
\end{bmatrix}},$$
with characteristic polynomial 
\begin{align*}
	\phi_1(x)=x^4 - (4k+4)x^3 + (3k^2 - 6k - 45)x^2 + (12k^2 + 72k - 52)x - 24k^2 + 64k + 28.
\end{align*}
Note that $\partial^*$ is the largest root of $\phi_1(x)$
and $\partial^*> \min\limits_{i} r_i(D(B^{(1)})=3k+7$.
Then
\begin{align*}
\phi_2(\partial^*)&=\phi_2(\partial^*)-\phi_1(\partial^*)
=-8{\partial^*}^2 + (16k - 16)\partial^* - 12k^2 + 8k - 48\\
&\le -8(3k+7)^2 + (16k - 16)(3k+7) - 12k^2 + 8k - 48\\
%&= -36k^2 -264k -552\\
&<0
\end{align*}
and hence $\partial(B^{(2)})>\partial^*$. 
\end{proof}

%%%%%%%%%%%%%%%%%%%%

\section{Concluding remarks}\label{sec:conclusion}
We have studied the relationship between extendability of matchings and the distance spectral radius of graphs. Related to extendability is the concept of $k$-factor-criticality, which was introduced by Favaron \cite{favaron1996k} and Yu \cite{qinglin1993characterizations}, independently. Based on results of $k$-extendability, Yu \cite{qinglin1993characterizations} generalized the idea of $k$-extendability to $k\tfrac{1}{2}$-extendability for graphs of odd order. Besides, Favaron \cite{favaron1996k} extended some results on factor-critical and bicritical graphs.

A graph $G$ is said to be \emph{$k$-factor-critical}, if $G-S$ has a perfect matching for every subset $S \subseteq V (G)$ with $|S| = k.$ It is clear that if a graph $G$ is $2k$-factor-critical then it must be $k$-extendable. Note that for bipartite graphs, one needs another definition that includes balancedness.

A Tutte-type characterization of $k$-factor-criticality due to Yu \cite{qinglin1993characterizations} and independently Favaron \cite{favaron1996k}, is as follows.
\begin{lem}\cite[Thm.~2.11]{qinglin1993characterizations}\cite[Thm.~3.5]{favaron1996k}
Let $k \ge 1$. A graph $G$ of order $n$ is $k$-factor-critical if and only if $n \equiv k\pmod 2$ and
$$o(G- S) \le  |S| - k$$
for any subset $S \subseteq V (G)$ with $|S| \ge k$.
\end{lem}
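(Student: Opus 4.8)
The plan is to reduce both implications to the classical $1$-factor theorem of Tutte, which asserts that a graph $H$ has a perfect matching if and only if $o(H-T)\le |T|$ for every $T\subseteq V(H)$. The pivotal observation is bookkeeping: if $S_0$ is a $k$-set of $G$ and $T\subseteq V(G)\setminus S_0$, then $S_0$ and $T$ are disjoint, so $|S_0\cup T|=k+|T|\ge k$ and $(G-S_0)-T=G-(S_0\cup T)$. Thus Tutte's condition applied to a vertex-deleted subgraph $G-S_0$ translates exactly into the displayed inequality for $G$, evaluated at sets of size at least $k$.

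For necessity, suppose $G$ is $k$-factor-critical (and $n\ge k$, so $k$-sets exist). Picking any $k$-subset $S'\subseteq V(G)$, the graph $G-S'$ has a perfect matching, hence has even order $n-k$, giving $n\equiv k\pmod 2$. For an arbitrary $S\subseteq V(G)$ with $|S|\ge k$, choose a $k$-subset $S'\subseteq S$; since $G-S'$ has a perfect matching, the necessity direction of Tutte's theorem gives $o((G-S')-R)\le |R|$ for every $R\subseteq V(G-S')$, and taking $R=S\setminus S'$ yields $o(G-S)=o((G-S')-(S\setminus S'))\le |S|-k$.

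For sufficiency, assume $n\equiv k\pmod 2$ and that $o(G-S)\le |S|-k$ whenever $|S|\ge k$. Fix an arbitrary $k$-subset $S_0\subseteq V(G)$ and set $H:=G-S_0$, a graph of even order $n-k$. For any $T\subseteq V(H)$ we have $|S_0\cup T|=k+|T|\ge k$, so the hypothesis gives $o(H-T)=o(G-(S_0\cup T))\le (k+|T|)-k=|T|$. Hence $H$ satisfies Tutte's condition and has a perfect matching; as $S_0$ was an arbitrary $k$-set, $G$ is $k$-factor-critical.

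I do not expect a genuine obstacle here: the argument is a clean translation through Tutte's theorem, in the same spirit as how Lemma~\ref{chenlemma} packages $k$-extendability. The only points needing care are the disjointness bookkeeping so that cardinalities add cleanly, the degenerate range $n<k$ (where the definition of $k$-factor-criticality is vacuous and the parity clause should be read accordingly), and noting that the parity hypothesis, while stated for symmetry with the necessity direction, is in fact already forced in the sufficiency direction by applying the inequality to a single $k$-set.
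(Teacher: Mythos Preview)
Your proof is correct and is precisely the standard reduction to Tutte's $1$-factor theorem used in the cited sources; the paper itself does not supply a proof of this lemma but simply quotes it from Yu and Favaron, so there is nothing further to compare.
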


By using this characterization and a similar analysis as for Theorem \ref{thm1}, we can obtain a sufficient condition in terms of the distance spectral radius to determine whether a graph is $k$-factor-critical.
\begin{thm}\label{thm3}
 Let $k\ge 1$ and $n \equiv k\pmod 2$ with $n\ge k+2$. Let $G$ be a connected graph of order $n$. If $$\partial(G)\le \partial(K_{k}\vee (K_{n-k-1}\cup K_1)),$$ then $G$ is $k$-factor-critical unless $G= K_{k}\vee (K_{n-k-1}\cup K_1)$.
\end{thm}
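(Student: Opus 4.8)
\textbf{Proof proposal for Theorem \ref{thm3}.}
The plan is to mirror the proof of Theorem \ref{thm1} almost step by step, replacing the Tutte-type bound $o(G-S)\le |S|-2k$ for $k$-extendability with the bound $o(G-S)\le |S|-k$ for $k$-factor-criticality, and keeping track of how the parameter shifts propagate. Concretely, suppose $G$ is connected of order $n$ with $n\equiv k\pmod 2$, $n\ge k+2$, but $G$ is not $k$-factor-critical. By the stated Tutte-type characterization there is a nonempty $S\subseteq V(G)$, say $|S|=s$, with $s\ge k$ and $o(G-S)>s-k$. As before, since $n$ and $s-k$ have the same parity ($n\equiv k$, and $o(G-S)\equiv n-s\equiv k-s\pmod 2$), we get $o(G-S)\ge s-k+2$; we may assume all components of $G-S$ are odd (moving a vertex from each even component into $S$) and that there are exactly $s-k+2$ of them (merging pairs of extra odd components). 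Hence $G$ is a spanning subgraph of $K_s\vee(K_{n_1}\cup\cdots\cup K_{n_{s-k+2}})$ with the $n_i$ odd and $\sum n_i=n-s$, and by Lemma \ref{PF} we may replace $G$ by this graph, then by Lemma \ref{bh} by $G^{(s)}:=K_s\vee(K_{n-2s+k-1}\cup(s-k+1)K_1)$, with equality in each case only for the respective graph. Note $n=s+\sum n_i\ge 2s-k+2$, i.e.\ $s\le \tfrac12(n+k-2)$, and $G^{(k)}=K_k\vee(K_{n-k-1}\cup K_1)$ is the claimed extremal graph; write $\partial^*:=\partial(G^{(k)})$.

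It then remains to show $\partial(G^{(s)})>\partial^*$ for $k+1\le s\le \tfrac12(n+k-2)$. Following Section \ref{Sec:proof1}, I would take the unit Perron eigenvector $\mathbf{x}=(\,\underbrace{a,\dots,a}_{k},\underbrace{b,\dots,b}_{n-k-1},c\,)^\top$ of $D(G^{(k)})$, refine the partition of the $b$-block into sizes $s-k$, $n-2s+k-1$, $s-k$, and compute $D(G^{(s)})-D(G^{(k)})$; as in the original proof this difference has a $-J$ block between the first $b$-subblock and the last vertex, a $+J$ block between the middle and last $b$-subblocks, and a $J-I$ block on the last $b$-subblock. The Rayleigh-quotient estimate gives
\[
\partial(G^{(s)})-\partial^*\ge (s-k)b\,\bigl[(2n-3s+k-3)b-2c\bigr],
\]
so, since $s\ge k+1$ and $b>0$, it suffices to prove $(2n-3s+k-3)b-2c>0$. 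From $D(G^{(k)})\mathbf{x}=\partial^*\mathbf{x}$ one extracts the three scalar equations and solves for $c$ in terms of $b$; by analogy with the computation in Section \ref{Sec:proof1} this should give $c=\bigl(1+\tfrac{n-k-2}{\partial^*+2}\bigr)b$ or a similar expression. Substituting this together with the worst case $s=\tfrac12(n+k-2)$ reduces the inequality to a lower bound on $\partial^*$ of the form $\partial^*>2+\tfrac{c_0}{n-k-\text{const}}$ for some small constant, which follows from the crude but strict bound $\partial^*>\min_i r_i(D(G^{(k)}))=n-1$ (strict because the row sums of $D(G^{(k)})$ are not constant).

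As in Theorem \ref{thm1}, the main obstacle is the boundary case where this estimate degrades, namely when $s$ is as large as possible and simultaneously the reduced inequality becomes tight; here that is $s=\tfrac12(n+k-2)$ together with the smallest admissible order, which I expect to be $n=k+2$ (so $s=k+1$). In that case $G^{(s)}=K_{k+1}\vee K_1$-type graph on few vertices while $G^{(k)}=K_k\vee(K_1\cup K_1)=K_k\vee 2K_1$, each with a small equitable partition; one writes down the quotient matrices of the distance matrices, takes their characteristic polynomials $\phi_s(x)$ and $\phi(x)$, forms $\varphi_s(x)=(x+\text{const})\phi_s(x)$ so that $\varphi_s(x)-\phi(x)$ is linear and negative at $x=\partial^*$ (using $\partial^*>n-1$ again), and concludes $\phi_s(\partial^*)<0$, hence $\partial(G^{(s)})>\partial^*$. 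The routine parts are the polynomial arithmetic and verifying the parity/range bookkeeping; the only place needing genuine care is confirming exactly which $(n,s)$ pairs escape the generic argument and handling each by the quotient-matrix comparison, exactly as done for Theorem \ref{thm1}.
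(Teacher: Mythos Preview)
Your overall strategy is exactly what the paper intends (the paper itself only says the proof is ``a similar analysis as for Theorem~\ref{thm1}'' and gives no further details), and your reductions via Lemmas~\ref{PF} and~\ref{bh}, the Rayleigh-quotient computation
\[
\partial(G^{(s)})-\partial^*\ge (s-k)b\bigl[(2n-3s+k-3)b-2c\bigr],
\]
and the relation $c=\bigl(1+\tfrac{n-k-2}{\partial^*+2}\bigr)b$ are all correct.

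The gap is in the identification of the exceptional case. Substituting the worst case $s=\tfrac12(n+k-2)$ into $(2n-3s+k-3)b>2c$ gives $\tfrac{n-k}{2}\,b>2c$, which after inserting the expression for $c$ becomes $\partial^*>2+\tfrac{8}{n-k-4}$; together with $\partial^*>n-1$ this holds for all $n\ge k+6$. The case $n=k+2$ that you single out is actually vacuous: then $s_{\max}=\tfrac12(n+k-2)=k$, so there is no $s\ge k+1$ to check at all (and your proposed pair $n=k+2$, $s=k+1$ violates $s\le\tfrac12(n+k-2)$ and would force the big clique in $G^{(s)}$ to have negative size). The genuine exceptional case is $n=k+4$, $s=k+1$: there the Rayleigh inequality collapses to $b>c$, which is false since $c>b$. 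In that case $G^{(s)}=K_{k+1}\vee 3K_1$ and $G^{(k)}=K_k\vee(K_3\cup K_1)$, i.e.\ precisely the exceptional graphs from Theorem~\ref{thm1} with $2k$ replaced by $k$; the quotient-matrix/characteristic-polynomial comparison there goes through verbatim under this substitution, the key inequality $\varphi_s(\partial^*)-\phi(\partial^*)=-\partial^*+k+3<0$ following from $\partial^*>n-1=k+3$.
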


% \section*{Acknowledgements}

%\section*{References}
\bibliographystyle{plain}
\bibliography{20230308}
%\begin{thebibliography}{}
%\bibitem{R.}R. 
%\end{thebibliography}
\end{spacing}
\end{document}